\newcommand{\argmin}{\mathop{\rm argmin}}
\newcommand{\norm}[1]{\left\lVert#1\right\rVert}
\newcommand{\mnorm}[1]{{\left\vert\kern-0.25ex\left\vert\kern-0.25ex\left\vert #1 
    \right\vert\kern-0.25ex\right\vert\kern-0.25ex\right\vert}}
\newtheorem{theorem}{Theorem}
\newtheorem{lemma}{Lemma}
\newtheorem{corollary}{Corollary}
\newtheorem{assumption}{Assumption}
\algnewcommand\algorithmicforeach{\textbf{for each}}
\newcommand{\eg}{{\it e.g.}}
\newcommand{\ie}{{\it i.e.}}
\title{\LARGE \bf Stochastic Bregman Parallel Direction Method of Multipliers for Distributed Optimization 
}
\author{Yue~Yu and Beh\c{c}et~A\c{c}\i kme\c{s}e
\thanks{
The authors are with the Department of Aeronautics and Astronautics, University of Washington, Seattle,
        WA, 98195; emails: 
        {\tt\small \{yueyu,behcet\}@uw.edu}}%
}
\begin{document}

\maketitle
\thispagestyle{empty}
\pagestyle{empty}

%%%%%%%%%%%%%%%%%%%%%%%%%%%%%%%%%%%%%%%%%%%%%%%%%%%%%%%%%%%%%%%%%%%%%%%%%%%%%%%%
\begin{abstract}
Bregman parallel direction method of multipliers (BPDMM) efficiently solves distributed optimization over a network, which arises in a wide spectrum of collaborative multi-agent learning applications. In this paper, we generalize BPDMM to stochastic BPDMM, where each iteration only solves local optimization on a randomly selected subset of nodes rather than all the nodes in the network. Such generalization reduce the need for computational resources and allows applications to larger scale networks. We establish both the global convergence and the \(O(1/T)\) iteration complexity of stochastic BPDMM. We demonstrate our results via numerical examples. 
\end{abstract}

%%%%%%%%%%%%%%%%%%%%%%%%%%%%%%%%%%%%%%%%%%%%%%%%%%%%%%%%%%%%%%%%%%%%%%%%%%%%%%%%
\section{Introduction}
Distributed optimization over a connected undirected network \(\mathcal{G}=(\mathcal{V}, \mathcal{E})\) is defined as follows
\begin{equation}
    \begin{array}{ll}
    \underset{x\in\mathcal{X}^{|\mathcal{V}|}}{\mbox{minimize}} & \sum\limits_{i\in\mathcal{V}} f_i(x_i)\\
        \mbox{subject to} & x_i=x_j, \enskip\forall \{i, j\}\in\mathcal{E} 
    \end{array}
    \label{opt: dist opt}
\end{equation}
where \(\mathcal{X}\subset\mathbb{R}^n\) is a closed convex set, \(\mathcal{X}^{\mathcal{V}}\) is the Cartesian product of \(|\mathcal{V}|\) copies of \(\mathcal{X}\), each \(f_i\) is a convex function accessible by node \(i\) only. The global optimality is achieved by local optimization on each node and efficient communication between neighboring nodes. In addition to classical applications such as formation control \cite{mesbahi2010graph}, distributed tracking \cite{li2002detection} and estimation \cite{accikmecse2014decentralized,lesser2012distributed}, problem \eqref{opt: dist opt} also arises in collaborative learning scenarios \cite{gholami2016decentralized,yahya2017collective}, where problem \eqref{opt: dist opt} represents distributed learning from data collected by multiple agents.  

There has been an increasing interest in applying multiplier methods to solve problem \eqref{opt: dist opt} \cite{wei2012distributed,meng2015proximal,deng2017parallel}. At each iteration of such methods, every primal variable is updated by optimizing a quadratic augmented Lagrangian; every dual variable is updated by numerically integrating local disagreement. Recently, Bregman parallel direction method of multipliers (PDMM) generalized the quadratic augmentation in local optimization to Bregman augmentation, which better exploits the structure of constraint set \(\mathcal{X}\), and hence leads to significant improvement in convergence speed \cite{wang2014bregman,yu2018bregman}.  

One challenge in implementing multiplier methods for problem \eqref{opt: dist opt} is that a local optimization problem needs to be solved on every node in parallel at each iteration, which requires demanding computational resources when applied to large scale networks. A popular approach to address this challenge is stochastic multiplier methods \cite{wei20131,wang2014parallel,zhu2016stochastic}, which combine multiplier methods with the idea of stochastic block coordinate descent \cite{nesterov2012efficiency,richtarik2014iteration}. At each iteration, stochastic multiplier methods only solve local optimization problems on, rather than all the nodes, a randomly selected subset of nodes. Such algorithms guarantee global convergence to optimum in expectation via proper choice of algorithm parameters. However, to our best knowledge, all existing stochastic multiplier methods use quadratic augmentation. In other words, there is no stochastic extension to Bregman augmentation based multiplier methods.

In this paper, we close this gap in the literature by proposing stochastic BPDMM, which combines the benefits of BPDMM and stochastic multiplier methods. Compared with BPDMM \cite{yu2018bregman}, it only requires solving local optimization on a randomly selected subset of nodes, which allows application to larger scale networks; compared with existing stochastic multiplier methods \cite{wei20131,wang2014parallel,zhu2016stochastic}, it extends quadratic augmented Lagrangian to Bregman augmented Lagrangian, which improves the convergence speed by better exploiting constraints structure. We establish the global convergence and \(O(1/T)\) iteration complexity of stochastic BPDMM, and demonstrate its effectiveness and efficiency via numerical examples.  

The rest of the paper is organized as follows. Section~\ref{section: preliminaries} covers necessary background and reformulates problem \eqref{opt: dist opt} with consensus constraints. Section~\ref{section: methods} develops the stochastic BPDMM, whose convergence proof is established in Section~\ref{section: convergence}. Section~\ref{section: experiments} presents numerical examples and demonstrates the advantages of stochastic BPDMM over prior work. Section~\ref{section: conclusion} concludes and comments on future directions.

\section{Preliminaries and Background}\label{section: preliminaries}

\subsection{Notation}
Let \(\mathbb{R}\) (\(\mathbb{R}_+\)) denote the set of (nonnegative) real numbers, \(\mathbb{R}^n\) (\(\mathbb{R}^n_+\)) the set of \(n\)-dimensional (elementwise nonnegative) vectors.  Let \(\geq(\leq)\) denote elementwise inequality when applied to vectors and matrices. Let \(\langle\cdot, \cdot\rangle\) denote the dot product.  Let \(I_n\in\mathbb{R}^{n\times n}\) denote the \(n\)-dimensional identity matrix, \(\mathbf{1}_n\in\mathbb{R}^n\) the \(n\)-dimensional vector of all \(1\)s. Given matrix \(A\in\mathbb{R}^{n\times n}\), let \(A_{ij}\) denote its \((i, j)\) entry; \(A^\top\) denotes its transpose. Let \(\otimes\) denote the Kronecker product. 
\subsection{Subgradients}
Let \(f:\mathbb{R}^n\to\mathbb{R}\) be a convex function. Then \(g\in\mathbb{R}^n\) is a subgradient of \(f\) at \(u\in\mathbb{R}^n\) if and only if for any \(v\in\mathbb{R}^n\) one has
\begin{equation}
f(v)-f(u)\geq \left\langle g, v-u\right\rangle.
\label{definition of subgradient}
\end{equation}
We denote \(\partial f(u)\) the set of subgradients of \(f\) at \(u\). An important case of subdifferential is  the case of indicator function of a non-empty convex set \(\mathcal{X}\) defined as \(\delta_\mathcal{X}(x)=0\) if \(x\in\mathcal{X}\) and \(\infty\) otherwise. We will use the following results.
\begin{lemma}\cite[Theorem 27.4]{rockafellar2015convex} Given a closed convex set \(\mathcal{X}\subseteq\mathbb{R}^n\) and closed, convex, proper function \(f:\mathbb{R}^n\to \mathbb{R}\), then \(u^\star=\argmin_{u\in\mathcal{X}}\, f(u)\) if and only if \(0\in\partial (f+\delta_ \mathcal{X})(u^\star)\).
\end{lemma}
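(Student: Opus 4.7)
The plan is to reduce the constrained minimization to an unconstrained one on all of $\mathbb{R}^n$ and then invoke the elementary Fermat-type subgradient characterization of global minimizers; both steps are purely definitional, which is why this is invoked as a lemma rather than a theorem requiring significant machinery.

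First I would observe that by the very definition of the indicator function, $(f+\delta_\mathcal{X})(u)=f(u)$ for $u\in\mathcal{X}$ and $+\infty$ otherwise, so the two problems $\min_{u\in\mathcal{X}} f(u)$ and $\min_{u\in\mathbb{R}^n}(f+\delta_\mathcal{X})(u)$ share the same optimal value and the same argmin set. Because $\mathcal{X}$ is closed convex and $f$ is closed convex proper, the sum $g:=f+\delta_\mathcal{X}$ inherits closedness and convexity; it is also proper under the implicit assumption that $\mathcal{X}\cap\mathrm{dom}(f)\neq\emptyset$, which is needed for the argmin to be well-posed.

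Next I would apply the subgradient inequality \eqref{definition of subgradient} to $g$ with the subgradient chosen to be the zero vector. By definition, $0\in\partial g(u^\star)$ if and only if $g(v)-g(u^\star)\geq \langle 0,\,v-u^\star\rangle=0$ for every $v\in\mathbb{R}^n$, which is precisely the statement that $u^\star$ is a global minimizer of $g$ on $\mathbb{R}^n$. Chaining this with the reformulation of the previous paragraph yields $u^\star=\argmin_{u\in\mathcal{X}} f(u) \iff 0\in\partial(f+\delta_\mathcal{X})(u^\star)$.

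The only subtlety worth flagging is that Rockafellar's Theorem 27.4 is normally stated with a sum decomposition $\partial(f+\delta_\mathcal{X})=\partial f+N_\mathcal{X}$, where $N_\mathcal{X}$ is the normal cone and a constraint qualification such as $\mathrm{ri}\,\mathrm{dom}(f)\cap\mathrm{ri}\,\mathcal{X}\neq\emptyset$ is required. The lemma as stated here does not use that decomposition, so no constraint qualification is needed and the proof is just the two lines above; the heavier content of Theorem 27.4 is only required if one wants to further unpack $\partial(f+\delta_\mathcal{X})(u^\star)$ into a primal condition on $\partial f(u^\star)$ and the normal cone of $\mathcal{X}$ at $u^\star$.
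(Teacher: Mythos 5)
Your proof is correct. Note that the paper itself offers no proof of this lemma --- it is imported verbatim as a citation of \cite[Theorem 27.4]{rockafellar2015convex} --- so there is no in-paper argument to compare against; your two-step derivation (rewrite the constrained problem as unconstrained minimization of $f+\delta_\mathcal{X}$, then apply the subgradient inequality \eqref{definition of subgradient} with the candidate subgradient $0$ to get Fermat's rule) is the standard elementary route, and your observation that the statement in this combined-subdifferential form needs no constraint qualification, unlike the sum-rule version $\partial f + N_\mathcal{X}$ actually proved in Rockafellar, is accurate. The only hypotheses you correctly flag as implicit are $\mathcal{X}\cap\mathrm{dom}(f)\neq\emptyset$ (so that $f+\delta_\mathcal{X}$ is proper) and $u^\star\in\mathcal{X}$ (so that the subdifferential at $u^\star$ is taken at a point where the function is finite); both are harmless here since the paper applies the lemma only at feasible points.
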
 
\subsection{Mirror maps and Bregman divergence} 
Let \(\mathcal{D}\subseteq\mathbb{R}^n\) be a convex open set. We say that \(\phi:\mathcal{D}\to\mathbb{R}\) is a {\em mirror map} \cite[p.298]{bubeck2015convex} if it satisfies: 1) \(\phi\) is differentiable and strictly convex, 2) \(\nabla\phi\) takes all possible values, and 3) \(\nabla\phi\) diverges on the boundary of the closure of \(\mathcal{D}\), \ie , \(\lim_{u\to\partial \bar{\mathcal{D}}}\norm{\nabla\phi(u)}=\infty\), where \(\norm{\cdot}\) is an arbitrary norm on \(\mathbb{R}^n\). The Bregman divergence  \(B_\phi:\mathcal{D}\times\mathcal{D}\to\mathbb{R}_+\) is defined as \cite[Sec. 2.1]{censor1997parallel}
\begin{equation}
B_\phi(u, v)=\phi(u)-\phi(v)-\left\langle \nabla\phi(v), u-v\right\rangle.
\label{definition of Bregman divergence}
\end{equation}
Note that \(B_\phi(u, v)\geq 0\) and \(B_\phi(u, v)=0\) only if \(u=v\). $B_\phi$ also satisfy the following three-point identity,
\begin{equation}
\begin{aligned}
&\langle\nabla\phi(u)-\nabla\phi(v), w-u\rangle\\
=&B_\phi(w, v)-B_\phi(w, u)-B_\phi(u, v).
\end{aligned}\label{3-point property}
\end{equation}  

\subsection{Graphs and distibuted optimization}
An undirected connected graph \(\mathcal{G}=(\mathcal{V}, \mathcal{E})\) contains a vertex set \(\mathcal{V}=\{1, 2, \ldots, m\}\) and an edge set \(\mathcal{E}\subseteq \mathcal{V}\times \mathcal{V}\) such that \((i, j)\in\mathcal{E}\) if and only if \((j, i)\in\mathcal{E}\) for all \(i, j\in\mathcal{V}\). Denote \(\mathcal{N}(i)\) the set of neighbors of node \(i\) such that \(j\in\mathcal{N}(i)\) if \((i, j)\in\mathcal{E}\). 

Consider a symmetric stochastic matrix \(P\in\mathbb{R}^{|\mathcal{V}|\times |\mathcal{V}|}\) defined on the graph \(\mathcal{G}\) such that \(P_{ij}>0\) implies that \(j\in\mathcal{N}(i)\). Such a matrix \(P\) can be constructed, for example, by the graph Laplacian \cite[Proposition 3.18]{mesbahi2010graph}. If \(P\) is irreducible \cite[Lem. 8.4.1]{horn2012matrix}, then \(1\) is a simple eigenvalue of \(P\) with eigenvectors spanned by \(\mathbf{1}_{|\mathcal{V}|}\).

% %The \emph{irreducibility} \cite[p~533]{horn2012matrix} of matrix \(P\) implies the connectivity of graph \(\mathcal{G}\). 
%Let \(\lambda_1(P)\) denote the eigenvalue of \(P\) with second largest modulus, which is less than $1$ when $P$ is also a irreducible matrix \cite{berman1994nonnegative} (equivalently aperiodic irreducible matrix).

Let \(\mathcal{G}=(\mathcal{V}, \mathcal{E})\) denote the underlying graph over which problem \eqref{opt: dist opt}  is defined. A common approach  to solve problem is to create local copies of the design variable \(\{x_1, x_2, \ldots, x_{|\mathcal{V}|}\}\) and impose the consensus constraints: \(x_i=x_j\) for all \((i, j)\in \mathcal{E}\) \cite{bertsekas1989parallel, boyd2011distributed}. Many different consensus constraints have been proposed \cite{wei2012distributed, jakovetic2013distributed, iutzeler2013asynchronous, shi2014linear}. In this paper, we consider consensus constraints of the form:
\begin{equation}
(P\otimes I_n)x=x,
\label{consensus constraints}
\end{equation}
where \(x =[x^\top_1, x^\top_2, \ldots, x^\top_{|\mathcal{V}|}]^\top\), \(P\) is a symmetric, stochastic and irreducible matrix defined on \(\mathcal{G}\). We will focus on the following reformulation of problem \eqref{opt: dist opt},
\begin{equation}
\begin{array}{ll}
\underset{x\in\mathcal{X}^{|\mathcal{V}|}}{\mbox{minimize}} & \sum\limits_{i\in\mathcal{V}} f_i(x_i)\\
\mbox{subject to} & (P\otimes I_n) x= x.
\end{array}
\label{opt: consensus optimization problem}
\end{equation}

\section{Stochastic Bregman Parallel Direction Method of Multipliers} \label{section: methods}

In this section, we first review BPDMM in Algorithm~\ref{alg: BPDMM}, then combine it with the stochastic node update in \cite{wang2014parallel} and propose sBPDMM in Algorithm~\ref{alg: sBPDMM}.

BPDMM \cite{yu2018bregman} solves problem \eqref{opt: consensus optimization problem} with Algorithm~\ref{alg: BPDMM}, which combines the idea of PDMM \cite{meng2015proximal} and Bregman augmented Lagrangian \cite{wang2014bregman}. Each iteration of the algorithm include the following steps:
\begin{enumerate}[(a)]
    \item \emph{Mirror averaging} Step \eqref{BPDMM: mirror averaging} computes a nodal mirror average of neighboring nodes' variables, and can be further decomposed as follows:
    \begin{subequations}
    \begin{align}
    \nabla\Phi(z^t)=&(P\otimes I_n) \nabla \Phi (x^t)\label{eqn: mirror avg 1}\\
    y^t=&\underset{y\in\mathcal{X}^{|\mathcal{V}|}}{\argmin} \enskip B_\Phi(y, z^t)\label{eqn: mirror avg 2}
    \end{align}
    \label{eqn: mirror decomp}
    \end{subequations}
    where \(\Phi(x)=\sum_{i\in\mathcal{V}}\phi(x_i)\).
    Therefore this step is equivalent to first apply \(\nabla\Phi\) to \(x^t\), then run an average step, followed by \((\nabla\Phi)^{-1}\), and finally a projection step. See Fig.~\ref{fig: mirror averaging} for an illustration. 
    \item \emph{Local optimization} Step \eqref{BPDMM: primal subproblem} optimizes a nodal augmented Lagrangian. In particular, the Bregman divergence term in the objective of \eqref{BPDMM: primal subproblem} augments the nodal Lagrangian by penalizing the difference from the nodal mirror average.  
    \item \emph{Disagreement integration} Step \eqref{BPDMM: dual update} is a discrete integration of the disagreement between neighboring nodes. Such integration is equivalent to a spring dynamics among neighboring nodes and improves the disturbance rejection performance of the algorithm. See \cite{wang2010control,yu2018mass} for a detailed discussion. 
\end{enumerate}
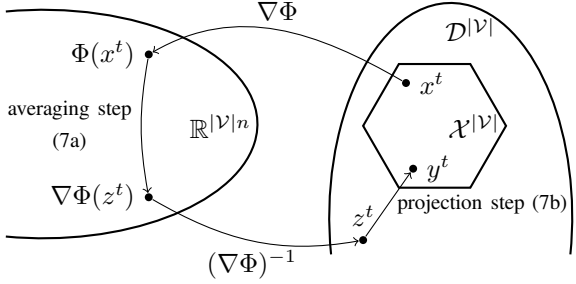
\begin{figure}
    \centering
    \begin{tikzpicture}[scale=0.95]
    %\draw (-5,0) ellipse (2 and 1.4);
    \draw[thick] (-6,-1.55) arc(-100:100:3 and 1.6);
    \node[circle,fill=black,inner sep=0pt,minimum size=3pt,label=left:{\(\Phi(x^t)\)}] (a) at (-4, 1) {};
    \node[circle,fill=black,inner sep=0pt,minimum size=3pt,label=left:{\(\nabla \Phi (z^t)\)}] (b) at (-4, -1){};
    \draw[->] (a) to [out=-100,in=100,edge node={node [midway,left,align=center] {\footnotesize{averaging step}\\ \footnotesize{\eqref{eqn: mirror avg 1}}}}] (b);
    
    \node at (-3, 0) {\(\mathbb{R}^{|\mathcal{V}|n}\)};

    \draw[thick] (1.9,-1.8) arc(-10:190:1.7 and 3);
    \draw[thick] (0:1) \foreach \x in {60,120,...,359} {
                 -- (\x:1)
             }-- cycle (90:1);
    \node at (0.5, 1.35) {\(\mathcal{D}^{|\mathcal{V}|}\)};
    \node at (0.55, 0) {\(\mathcal{X}^{|\mathcal{V}|}\)};
    \node[circle,fill=black,inner sep=0pt,minimum size=3pt,label=right:{\(x^t\)}] (c) at (-0.4, 0.6){};
    
    \node[circle,fill=black,inner sep=0pt,minimum size=3pt,label=right:{\(y^t\)}] (d) at (-0.3, -0.6){};
    
    \node[circle,fill=black,inner sep=0pt,minimum size=3pt,label=above:{\(z^t\)}] (e) at (-1, -1.6){};
    
    \draw[->] (c) to [out=150, in=30, edge node={node [midway,above] {\(\nabla \Phi\)}}] (a);
    \draw[->] (b) to [out=-30, in=190, edge node={node [midway,below] {\((\nabla \Phi)^{-1}\)}}] (e);
    \draw[->] (e) to [edge node={node [midway,right,align=center] {\footnotesize{projection step \eqref{eqn: mirror avg 2}}}}] (d);
    
    \end{tikzpicture}
    \caption{Mirror averaging}
    \label{fig: mirror averaging}
\end{figure}

Both mirror averaging step \eqref{BPDMM: mirror averaging} and disagreement integration step \eqref{BPDMM: dual update} have close-form update when the constraint set \(\mathcal{X}\) is structured, \eg , \(\mathcal{X}\) is \(\mathbb{R}^n\) or the probability simplex \cite{yu2018bregman}. On the other hand, the local optimization step \eqref{BPDMM: primal subproblem} typically requires an iterative algorithm itself, \eg , mirror descent method \cite{nemirovsky1983problem}. Hence the main computational effort of implementing Algorithm~\ref{alg: BPDMM} is caused by the local optimization step \eqref{BPDMM: primal subproblem}. At each iteration, Algorithm~\ref{alg: BPDMM} requires at least \(|\mathcal{V}|\) processors, one assigned to each node, to solve optimization \eqref{BPDMM: primal subproblem} in parallel. Such requirements are computationally demanding for large scale networks. 

\begin{algorithm}
\caption{BPDMM}
\begin{algorithmic}[h]
\Require Parameters: \(\tau, \rho >0\); initial point \(x^{0}\in(\mathcal{X}\cap\mathcal{D})^{|\mathcal{V}|} , \mu^{0}\in\mathbb{R}^{|\mathcal{V}|n}\).
\ForAll{\(t=0, 1, 2, \ldots\)}
\State{\begin{subequations}
\begin{align}
&y_i^{t}  =  \underset{y_i\in \mathcal{X}}{\argmin}\sum_{j\in\mathcal{N}(i)} P_{ij} B_\phi(y_i, x_j^{t}),\enskip \forall i\in\mathcal{V}\label{BPDMM: mirror averaging}\\
&\begin{aligned}
x_{i}^{t+1} = \,  \underset{x_i\in\mathcal{X}}{\argmin}  \,\,  &f_i(x_i) +\langle x_i, \mu_i^{t} - \sum_{j\in\mathcal{N}(i)}P_{ij}\mu^{t}_j\rangle\\
 &+ \rho B_\phi(x_i, y_i^{t}),\enskip \forall i\in\mathcal{V} \label{BPDMM: primal subproblem}
\end{aligned}
\end{align}\label{BPDMM primal update}
\end{subequations}
\begin{equation}
    \mu_i^{t+1} =  \mu_i^{t}+\tau x_i^{t+1}-\tau\sum_{j\in\mathcal{N}(i)}P_{ij}x_j^{t+1},\enskip \forall i\in\mathcal{V}\label{BPDMM: dual update}
\end{equation}
}
\EndFor
\end{algorithmic}
\label{alg: BPDMM}
\end{algorithm}
 
In order to address this challenge, we propose Algorithm~\ref{alg: sBPDMM}, which uses a stochastic node update \cite{wei20131,wang2014parallel,zhu2016stochastic}. Compared with Algorithm~\ref{alg: BPDMM}, each iteration of Algorithm~\ref{alg: sBPDMM} only execute local optimization step on a set of randomly selected nodes, which requires less number of processors running in parallel. This flexibility reduce the requirements on the total computation power of the network, and allows BPDMM to be applicable much larger scale networks.

\begin{algorithm}
\caption{stochastic BPDMM}
\begin{algorithmic}[h]
\Require Parameters: \(\tau, \rho>0\); initial point \(x^{0}\in(\mathcal{X}\cap \mathcal{D})^{|\mathcal{V}|}, \mu^{0}\in\mathbb{R}^{|\mathcal{V}|n}\). 
\ForAll{\(t=0, 1, 2, \ldots\)}
\State{Randomly select a subset of nodes \(\mathcal{S}_{t+1} \subset\mathcal{V}\).
\begin{subequations}
\begin{align}
    &y_i^{t}  =  \underset{y_i\in \mathcal{X} }{\argmin}\sum_{j\in\mathcal{N}(i)} P_{ij} B_\phi(y_i, x_j^{t}), \enskip \forall i\in\mathcal{S}_{t+1}\label{mirror averaging}\\
    &\begin{aligned}
    x_{i}^{t+1} =  \underset{x_i\in\mathcal{X}}{\argmin}  \,\, & f_i(x_i) +\langle x_i, \mu_i^{t}-\sum_{j\in\mathcal{N}(i)}P_{ij}\mu^{t}_j\rangle\\
    & + \rho B_\phi(x_i, y_i^{t}),\enskip \forall i\in\mathcal{S}_{t+1}\label{primal update}
\end{aligned}\\
& x_i^{t+1}=x_i^t, \enskip \forall i\in \mathcal{\mathcal{V}}\setminus\mathcal{S}_{t+1}\label{not updated variable}
\end{align}\label{sBPDMM primal update}
\end{subequations}
\begin{equation}
 \mu_i^{t+1} =  \mu_i^{t}+\tau x_i^{t+1}-\tau\sum_{j\in\mathcal{N}(i)}P_{ij}x_j^{t+1}, \enskip \forall i\in\mathcal{V} \label{dual update} 
\end{equation}
}
\EndFor
\end{algorithmic}
\label{alg: sBPDMM}
\end{algorithm}

Although the generalization from Algorithm~\ref{alg: BPDMM} to Algorithm~\ref{alg: sBPDMM} seems straightforward, the generalization in the corresponding convergence proof requires more careful treatment. In particular, the convergence proof of Algorithm~\ref{alg: BPDMM} in \cite{yu2018bregman} hinges on a monotonically non-increasing non-negative Lyapunov function for full primal update in \eqref{BPDMM primal update} with carefully chosen algorithm parameters. In order to generalize such proof to Algoritjm~\ref{alg: sBPDMM}, we need to answer the following questions:
\begin{itemize}
    \item How to find a monotonically non-increasing non-negative Lyapunov function for stochastic partial primal update in \eqref{sBPDMM primal update}? 
    \item How does the randomly selected node set \(\mathcal{S}_{t+1}\) affect the choice of algorithm parameters?
\end{itemize}

In the sequel, we aim to answer theses questions and establish the convergence proof of Algorithm~\ref{alg: sBPDMM}.

\section{Convergence}
\label{section: convergence}
In this section, we prove the global convergence as well as the \(O(1/T)\) iteration complexity of Algorithm~\ref{alg: sBPDMM}. All detailed proof in this section can be found in the Appendix.% of \cite{yu2018stochastic}.

We first group our assumptions in Assumption~\ref{basic assumption}.
\begin{assumption}
\label{basic assumption}
\begin{enumerate}[(a)]
\item Function \(f_i:\mathbb{R}^n\to\mathbb{R}\cup \{+\infty\}\) are closed, proper and convex for all \(i\in\mathcal{V}\).
\item Set \(\mathcal{X}\subset\mathbb{R}^n\) is closed and convex. There exists a saddle point \((x^\star, \mu^\star)\) such that  \(x^\star_i\in\mathcal{X}\) and
\begin{subequations}
    \begin{align}
      \sum_{j\in\mathcal{V}}P_{ij}x_j^\star & =x_i^\star \label{KKT: primal} \\
      -\mu^\star_i+\sum_{j\in\mathcal{V}} P_{ij}\mu^\star_j & \in \partial (f_i+\delta_\mathcal{X})(x_i^\star), \label{KKT: dual}
    \end{align} \label{KKT}
\end{subequations}
for all \(i\in\mathcal{V}\).
\item  Function \(\phi:\mathcal{D}\to\mathbb{R}\) is a mirror map, where \(\mathcal{D}\) is a open convex set such that \(\mathcal{X}\) is included in its closure. In addition, function \(\phi\) is \(\alpha\)-strongly convex with respect to \(l_p\)-norm, \ie, for any \(u, v\in \mathcal{X}\), 
\begin{equation}
B_\phi(u, v)\geq \frac{\alpha}{2}\norm{u-v}_p^2.
\label{phi strong convexity}
\end{equation} 
\item Matrix \(P\) is symmetric, stochastic, irreducible and positive semi-definite. \label{P matrix assumption}
\item At each iteration \(t+1\), we assume \(|\mathcal{S}_{t+1}|/|\mathcal{V}|=\omega, 0<\omega<1.\)
\end{enumerate}
\end{assumption}

Now we start to construct the convergence proof of Algorithm~\ref{alg: sBPDMM} under Assumption~\ref{basic assumption}. The optimality condition of \eqref{primal update} is that for all \(i\in\mathcal{S}_{t+1}\), 
\begin{equation}
\begin{aligned}
&-\mu_{i}^{t}+\sum_{j\in\mathcal{V}} P_{ij}\mu_j^{t}-\rho\left(\nabla\phi(x_{i}^{t+1})-\nabla\phi(y_{i}^{t})\right)\\
\in &\,\,\partial (f_{i}+\delta_\mathcal{X})(x_{i}^{t+1})
\end{aligned}
\label{primal updata: optimality}
\end{equation}
Define the residuals of optimality conditions \eqref{primal updata: optimality} at iteration \(t\) as
\begin{equation}
\begin{aligned}
&R(t+1)\coloneqq \omega (L(x^t, \mu^\star)-L(x^\star, \mu^\star))\\
&+\rho\sum_{i\in\mathcal{S}_{t+1}}B_\phi(x_{i}^{t+1}, y_{i}^{t})+\frac{\gamma\rho}{2}\norm{((I_{|\mathcal{V}|}-P)\otimes I_n)x^{t}}_2^2,
\end{aligned}
\label{residuals}
\end{equation}
where \(\gamma>0\) and Lagrangian \(L(x, \mu)\) is defined as
\begin{equation}
    L(x, \mu)=\sum_{i\in\mathcal{V}}(f_i+\delta_\mathcal{X})(x_i)+\langle \mu, ((I_{|\mathcal{V}|}-P)\otimes I_n)x\rangle.
    \label{eqn: Lagrangian}
\end{equation}
Using \eqref{KKT} and \eqref{definition of subgradient} we can show the following
\begin{equation}
   L(x^t, \mu^\star)-L(x^\star, \mu^\star)\geq 0 
   \label{eqn: duality gap}
\end{equation}
Hence \(L(x^t, \mu^\star)-L(x^\star, \mu^\star)\) defines a \emph{running duality gap} that measures distance to optimality \cite{meng2015proximal}. Notice that given \(x^t\), \(R(t+1)\) is a random variable only depends on \(\mathcal{S}_{t+1}\) and \(\mathds{E}_{\mathcal{S}_{t+1}}\left[R(t+1)\right]=0\) implies that \(L(x^t, \mu^\star)=L(x^\star, \mu^\star)\) and \(x_i^t=x_j^t\) for all \(i, j\in\mathcal{V}\), \ie , both optimality and consensus are achieved.

In order to show \(\mathds{E}_{\mathcal{S}_{t+1}}\left[R(t+1)\right]=0\), we define the following Lyapunov function of Algorithm~\ref{alg: sBPDMM}
\begin{equation}
\begin{aligned}
V(t)\coloneqq &H(x^t, \mu^t) +\frac{\omega}{2\tau}\norm{\mu^\star-\mu^{t-1}}_2^2\\
&+\rho\sum_{i\in\mathcal{V}} B_\phi(x_i^\star, x_i^{t}).
\end{aligned}
\label{Lyapunov function}
\end{equation}
where 
\begin{equation}
    H(x^t, \mu^t)=L(x^t, \mu^t) -L(x^\star, \mu^\star)-\tau\norm{Q\otimes I_n)x^{t}}_2^2
    \label{generalized Lagrangian}
\end{equation}
with \(Q=I_{|\mathcal{V}|}-P\) and
\(\mu^{-1}\coloneqq \mu^{0}-\tau((I_{|\mathcal{V}|}-P)\otimes I_n)x^{0}\).

Compared with the one used in \cite{yu2018bregman}, the Lyapunov function \(V(t)\) defined by \eqref{Lyapunov function} contains a generalized Lagrangian \(H(x^t, \mu^t)\), which renders the positive definiteness of \(V(t)\) unclear. The following lemma shows that \(V(t)\) is indeed positive definite, and lower bounded by a Bregman divergence to the optimum.
\begin{lemma}
Suppose Assumption~\ref{basic assumption} holds, if 
\begin{equation}
\tau\leq \frac{\rho\left(\omega\alpha\sigma-\gamma\right)}{2-\omega}, \quad 0< \gamma<\omega\alpha\sigma,
\label{parameter}
\end{equation}
where \(\sigma=\min\{1, n^{\frac{2}{p}-1}\}\), \(p\) and \(\alpha\) are defined in \eqref{phi strong convexity}, then the Lyapunov function defined in \eqref{Lyapunov function} satisfy
\begin{equation}
V(t)\geq \frac{(1-\omega)\omega\alpha\sigma\rho+\gamma\rho}{(2-\omega)\omega\alpha\sigma}\sum_{i\in\mathcal{V}}B_\phi(x_i^\star, x_i^{t}).
\label{eqn: lemma Lyapunov positive}
\end{equation}
\label{lemma Lyapunov positive}
\end{lemma}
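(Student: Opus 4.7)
The plan is to rewrite $H(x^t,\mu^t)$ entirely in terms of dual iterates using the update relation $\mu^t - \mu^{t-1} = \tau(Q\otimes I_n)x^t$ (which holds for all $t\ge 0$ given the convention on $\mu^{-1}$), then apply Young's inequality and strong convexity to absorb the resulting negative terms into the Bregman sum, with the parameter bound \eqref{parameter} making the final absorption tight.

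First I would split $L(x^t,\mu^t) - L(x^\star,\mu^\star)$ through $L(x^t,\mu^\star)$ using $(Q\otimes I_n)x^\star = 0$ (which follows from \eqref{KKT: primal}), substitute $(Q\otimes I_n)x^t = (\mu^t-\mu^{t-1})/\tau$, and apply the identity $2\langle u-v, w-u\rangle = \|w-v\|^2 - \|w-u\|^2 - \|u-v\|^2$ to obtain
\begin{equation*}
H(x^t,\mu^t) = [L(x^t,\mu^\star) - L(x^\star,\mu^\star)] + \tfrac{1}{2\tau}\|\mu^t - \mu^\star\|_2^2 - \tfrac{1}{2\tau}\|\mu^{t-1} - \mu^\star\|_2^2 - \tfrac{1}{2\tau}\|\mu^t - \mu^{t-1}\|_2^2.
\end{equation*}
Substituting into \eqref{Lyapunov function} leaves the lagging iterate $\|\mu^{t-1}-\mu^\star\|_2^2$ with coefficient $-(1-\omega)/(2\tau) < 0$. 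The key step is Young's inequality $\|\mu^{t-1}-\mu^\star\|_2^2 \le \beta\|\mu^t-\mu^\star\|_2^2 + \tfrac{\beta}{\beta-1}\|\mu^t - \mu^{t-1}\|_2^2$ applied with the tuned weight $\beta = 1/(1-\omega)$: this cancels the $\|\mu^t-\mu^\star\|_2^2$ contribution exactly and collapses the $\|\mu^t - \mu^{t-1}\|_2^2$ coefficients to $-1/(2\tau\omega)$, giving $V(t) \ge [L(x^t,\mu^\star) - L(x^\star,\mu^\star)] - \tfrac{1}{2\tau\omega}\|\mu^t - \mu^{t-1}\|_2^2 + \rho\sum_i B_\phi(x_i^\star, x_i^t)$.

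To finish, I would invoke the dual update once more to write $\|\mu^t-\mu^{t-1}\|_2^2 = \tau^2\|(Q\otimes I_n)x^t\|_2^2$, exploit $\|Q\|_2 \le 1$ (from symmetric positive semidefinite $P$ with spectrum in $[0,1]$) together with $(Q\otimes I_n)x^\star = 0$ to upper-bound this by $\tau^2\|x^t - x^\star\|_2^2$, drop the nonnegative duality gap \eqref{eqn: duality gap}, and split the Bregman sum as $\rho\eta \sum_i B_\phi + \rho(1-\eta)\sum_i B_\phi$. Strong convexity \eqref{phi strong convexity} combined with the $\ell_p$--$\ell_2$ norm equivalence $\|v\|_p^2 \ge \sigma\|v\|_2^2$ yields $\rho\eta \sum_i B_\phi(x_i^\star, x_i^t) \ge \tfrac{\rho\eta\alpha\sigma}{2}\|x^t - x^\star\|_2^2$; the choice $\eta = (\omega\alpha\sigma-\gamma)/[(2-\omega)\omega\alpha\sigma]$ makes this absorb $\tfrac{\tau}{2\omega}\|x^t - x^\star\|_2^2$ exactly under hypothesis \eqref{parameter}, and a short algebraic simplification shows the leftover $\rho(1-\eta)\sum_i B_\phi$ equals the claimed lower bound.

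The main obstacle is calibrating the Young weight $\beta$ in the middle step: since $\omega < 1$, the lagging iterate $\|\mu^{t-1}-\mu^\star\|_2^2$ is genuinely negative in $V(t)$ after substitution, and only the specific choice $\beta = 1/(1-\omega)$ produces a residue clean enough to be converted via the dual update into a pure $\|x^t - x^\star\|_2^2$ term that the Bregman sum can absorb under precisely the parameter bound given; the rest is careful bookkeeping.
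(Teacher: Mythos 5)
Your proof is correct and follows essentially the same route as the paper's: rewrite \(H(x^t,\mu^t)\) via the dual update \(\mu^t-\mu^{t-1}=\tau(Q\otimes I_n)x^t\), cancel the \(\|\mu^{t-1}-\mu^\star\|_2^2\) term with an \(\omega\)-calibrated Young inequality (the paper does this in one step as \(2\langle a,b\rangle\geq -\omega\|a\|_2^2-\tfrac{1}{\omega}\|b\|_2^2\)), and absorb the residual \(-\tfrac{1}{2\omega\tau}\|\mu^t-\mu^{t-1}\|_2^2\) into a \(\tau/(\omega\rho\alpha\sigma)\) fraction of the Bregman sum, which leaves exactly the claimed coefficient. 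The only deviation is that you bound \(\|(Q\otimes I_n)x^t\|_2\leq\|x^t-x^\star\|_2\) directly from \(\|I_{|\mathcal{V}|}-P\|_2\leq 1\) rather than invoking Lemma~\ref{lemma residual & variance}; this yields the identical inequality \(\sigma\|(Q\otimes I_n)x^t\|_2^2\leq\sum_{i\in\mathcal{V}}\|x_i^t-x_i^\star\|_p^2\) and still relies on the positive semidefiniteness of \(P\), so nothing is lost.
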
 
The sketch of the proof is as follows. Use equation \eqref{KKT: dual} and \eqref{dual update} we can show \begin{equation*}
    \textstyle H(x^t, \mu^t)\geq -\frac{\omega}{2\tau}\norm{\mu^{t-1}-\mu^\star}_2^2-\frac{1}{2\omega\tau}\norm{\mu^{t}-\mu^{t-1}}_2^2.
\end{equation*}
In addition, equation \eqref{dual update} and Assumption~\ref{basic assumption}, particularly assumptions on function \(\phi\) and matrix \(P\), ensures that 
\[\textstyle-\frac{1}{2\omega\tau}\norm{\mu^t-\mu^{t-1}}_2^2+\frac{\tau}{2\omega\sigma}\sum_{i\in\mathcal{V}}B_{\phi}(x_i^\star,x_i^t)\geq 0.\]
Substitute these two inequalities into \eqref{Lyapunov function}, use \eqref{phi strong convexity} we can show \(V(t)\geq (\rho-\frac{\tau}{\omega\alpha\sigma})\sum_{i\in\mathcal{V}}B_\phi(x_i^\star, x_i^t)\), which, due to the assumption in \eqref{parameter}, finally reduces to \eqref{eqn: lemma Lyapunov positive}. Then positive definiteness of \(V(t)\) follows from the positive definiteness of Bregman divergence and the fact \(\frac{(1-\omega)\omega\alpha\sigma\rho+\gamma\rho}{(2-\omega)\omega\alpha\sigma}>0\) when \(0<\omega<1\).

Notice that \(V(t)\) is a random variable whose value depends on the realization of \(\mathcal{S}_{1:t}\), which is the history of selected node sets, \ie, \(\{\mathcal{S}_1, \mathcal{S}_2, \ldots, \mathcal{S}_{t}\}\). The following theorem shows that the expected value of \(V(t)\) conditioned on \(\mathcal{S}_{1:t}\), \ie , \(\mathds{E}_{\mathcal{S}_{1:t}}[V(t)]\) is monotonically non-increasing with respect to \(t\).  

\begin{theorem}[Global convergence]
Suppose that Assumption~\ref{basic assumption} . Let the sequence \(\{y^{t}, x^{t}, \mu^{t}\}\) be generated by Algorithm~\ref{alg: sBPDMM}. Let \(R(t+1)\) and \(V(t)\) be defined as in \eqref{residuals} and \eqref{Lyapunov function}, respectively. If \(\rho, \tau, \gamma, \omega\) satisfy
\eqref{parameter}, then we have the following monotonicity relation
\begin{equation*}
\mathds{E}_{\mathcal{S}_{1:t}}\left[V(t)\right]-\mathds{E}_{\mathcal{S}_{1:t+1}}\left[V(t+1)\right]\geq \mathds{E}_{\mathcal{S}_{1:t+1}}\left[R(t+1)\right].
%\label{eqn: monotonicity}
\end{equation*}
\label{theorem global convergence}
\end{theorem}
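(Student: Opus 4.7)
The plan is to follow the standard template for primal-dual convergence analysis in Bregman ADMM-type methods, with the new ingredient being a conditional expectation over $\mathcal{S}_{t+1}$ given $\mathcal{S}_{1:t}$; the full unconditional bound then follows by the tower property. The strategy is to first derive, deterministically given $\mathcal{S}_{t+1}$, an inequality relating $V(t+1)$ to $V(t)$ and the per-realization residual inside $R(t+1)$, and only then take $\mathds{E}[\,\cdot\mid\mathcal{S}_{1:t}]$.

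First I would invoke the optimality condition \eqref{primal updata: optimality} at each $i\in\mathcal{S}_{t+1}$. Pairing the subgradient with $x_i^\star-x_i^{t+1}$ via the subgradient inequality \eqref{definition of subgradient} and applying the three-point identity \eqref{3-point property} to the $\rho(\nabla\phi(x_i^{t+1})-\nabla\phi(y_i^t))$ term produces, for each selected node, an inequality whose Bregman pieces $\rho B_\phi(x_i^\star,y_i^t)-\rho B_\phi(x_i^\star,x_i^{t+1})-\rho B_\phi(x_i^{t+1},y_i^t)$ generate precisely the telescoping $\sum_i B_\phi(x_i^\star,x_i^t)$ term in $V$ and the $\sum_{i\in\mathcal{S}_{t+1}} B_\phi(x_i^{t+1},y_i^t)$ term in $R(t+1)$. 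For $i\notin\mathcal{S}_{t+1}$, equation \eqref{not updated variable} trivially yields $x_i^{t+1}=x_i^t$, so the corresponding contributions to the Lagrangian and to $\sum_i B_\phi(x_i^\star,x_i^{t+1})$ are simply inherited from iteration $t$.

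Next I would feed in the dual update \eqref{dual update}, rewriting $\mu^{t+1}-\mu^t=\tau(Q\otimes I_n)x^{t+1}$, and use the KKT relation \eqref{KKT: dual} to convert the remaining inner products $\langle \mu_i^t-\sum_j P_{ij}\mu_j^t,\,x_i^{t+1}-x_i^\star\rangle$ into the Lagrangian gap $L(x^{t+1},\mu^\star)-L(x^\star,\mu^\star)$. Combining with the identity $2\langle \mu^\star-\mu^t,\mu^{t+1}-\mu^t\rangle=\|\mu^\star-\mu^t\|_2^2-\|\mu^\star-\mu^{t+1}\|_2^2+\|\mu^{t+1}-\mu^t\|_2^2$ produces the telescoping $\|\mu^\star-\mu^{t-1}\|_2^2/(2\tau)$ pieces of $V$; the $-\tau\|(Q\otimes I_n)x^{t+1}\|_2^2$ term inside the generalized Lagrangian $H$ is included precisely to absorb the leftover $\|\mu^{t+1}-\mu^t\|_2^2/\tau$. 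Taking $\mathds{E}[\,\cdot\mid\mathcal{S}_{1:t}]$ then converts $\sum_{i\in\mathcal{S}_{t+1}}$ into $\omega\sum_{i\in\mathcal{V}}$ via Assumption~\ref{basic assumption}(e); in particular, $\mathds{E}[L(x^{t+1},\mu^\star)\mid\mathcal{S}_{1:t}]$ decomposes into an $\omega$-weighted Lagrangian gap at the fully updated primal plus $(1-\omega)L(x^t,\mu^\star)$, which is exactly what delivers the factor $\omega$ multiplying the running duality gap in $R(t+1)$.

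The parameter condition \eqref{parameter} is then the precise budget needed to guarantee that the residual coefficient $\tfrac{\gamma\rho}{2}\|(Q\otimes I_n)x^t\|_2^2$ in $R(t+1)$ emerges non-negative after collecting all cross terms, using Assumption~\ref{basic assumption}(d) ($P\succeq 0$, hence $Q=I-P\preceq 2I$) and the strong convexity of $\phi$ from Assumption~\ref{basic assumption}(c), in the same spirit as the proof of Lemma~\ref{lemma Lyapunov positive}. I expect the principal obstacle to be the bookkeeping in the preceding step: cleanly separating contributions of $\mathcal{S}_{t+1}$ and $\mathcal{V}\setminus\mathcal{S}_{t+1}$ inside the Lagrangian before conditional expectation is taken, and verifying that the generalized Lagrangian correction $-\tau\|(Q\otimes I_n)x^t\|_2^2$ in $H$ exactly cancels the quadratic dual residual that would otherwise break the telescoping.
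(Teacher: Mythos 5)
Your skeleton matches the paper's: subgradient inequality at the optimality condition \eqref{primal updata: optimality} paired with \(x_i^\star-x_i^{t+1}\), three-point identity \eqref{3-point property} on the Bregman term, substitution of the dual update \eqref{dual update} to telescope the \(\norm{\mu^\star-\mu^{t-1}}_2^2\) pieces, and a conditional expectation over \(\mathcal{S}_{t+1}\) followed by the tower property. However, there is a genuine gap in how you close the argument. The three-point identity produces \(B_\phi(x_i^\star, y_i^t)-B_\phi(x_i^\star,x_i^{t+1})-B_\phi(x_i^{t+1},y_i^t)\), and the first term is evaluated at the \emph{mirror average} \(y_i^t\), not at \(x_i^t\); it therefore does not, as you claim, ``generate precisely the telescoping \(\sum_i B_\phi(x_i^\star,x_i^t)\) term in \(V\).'' The paper must separately control the leftover \(\rho\omega\sum_{i\in\mathcal{V}}\left(B_\phi(x_i^\star,y_i^t)-B_\phi(x_i^\star,x_i^t)\right)\), and this is where the real work lies: the optimality of \(y_i^t\) in \eqref{mirror averaging} yields the generalized Pythagorean inequality of Lemma~\ref{lemma Pythagorean}, namely \(\sum_i\left(B_\phi(x_i^\star,x_i^t)-B_\phi(x_i^\star,y_i^t)\right)\geq\sum_{i,j}P_{ij}B_\phi(y_i^t,x_j^t)\), which combined with strong convexity \eqref{phi strong convexity} and Lemma~\ref{lemma residual & variance} gives the key negative contribution \(-\tfrac{\rho\omega\alpha\sigma}{2}\norm{(Q\otimes I_n)x^t}_2^2\). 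Without this term there is nothing to absorb the \(+\tfrac{\gamma\rho}{2}\norm{(Q\otimes I_n)x^t}_2^2\) inside \(R(t+1)\) together with the residual \(\tau\)-quadratics; the ingredients you invoke instead (\(P\succeq 0\) hence \(Q\preceq 2I\), plus strong convexity of \(\phi\)) only produce upper bounds on \(\norm{(Q\otimes I_n)x^t}_2^2\), not the required negative multiple of it, so the budget \eqref{parameter} cannot be verified along your route.

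A secondary, likely fixable, imprecision: \(R(t+1)\) contains the duality gap at \(x^t\) with weight \(\omega\), and the paper obtains this by first shifting \(\sum_{i\in\mathcal{S}_{t+1}}f_i(x_i^{t+1})\) to \(\sum_{i\in\mathcal{S}_{t+1}}f_i(x_i^{t})\) (absorbing the difference \(\sum_{i\in\mathcal{V}}(f_i(x_i^t)-f_i(x_i^{t+1}))\) into the Lagrangian telescoping) and only then taking the expectation, so that \(\mathds{E}_{\mathcal{S}_{t+1}}\left[\sum_{i\in\mathcal{S}_{t+1}}(f_i(x_i^t)-f_i(x_i^\star))\right]=\omega\sum_{i\in\mathcal{V}}(f_i(x_i^t)-f_i(x_i^\star))\). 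Your proposed decomposition of \(\mathds{E}[L(x^{t+1},\mu^\star)]\) assigns weight \(1-\omega\) to the gap at \(x^t\), which is not the quantity appearing in \eqref{residuals}; you would need to rearrange as the paper does in order to land on the stated residual.
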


The sketch of the proof is as follows. We substitute the subgradient in \eqref{primal updata: optimality} into \eqref{definition of subgradient} and obtain an inequality. Use three point property \eqref{3-point property} we can split the right hand side of this inequality into three parts, each contributes to \(R(t+1), V(t)\) and \(V(t+1)\), respectively. Taking the expectation over realization of \(\mathcal{S}_{t+1}\) conditioned on the value of \(x^t\), we obtain the following relation 
\begin{equation}
    \mathds{E}_{\mathcal{S}_{t+1}}[R(t+1)]\leq V(t)-\mathds{E}_{\mathcal{S}_{t+1}}[V(t+1)],
    \label{eqn: one step expectation}
\end{equation}
where assumptions in Assumption~\ref{basic assumption} and \eqref{parameter} ensures that all intermediate terms cancel each other. Taking the expectation over the realization of \(\mathcal{S}_{1:t}\) on both sides of \eqref{eqn: one step expectation}, we reach the inequality in Theorem~\ref{theorem global convergence}. 

Summing the inequality in Theorem~\ref{theorem global convergence} from the case of \(t=0\) to \(t={T-1}\) we have
\begin{equation}
    \textstyle \sum_{t=1}^{T} \mathds{E}_{\mathcal{S}_{1:t}}[R(t)]\leq V(0).
    \label{eqn: telescope series}
\end{equation}
Since \(\mathds{E}_{\mathcal{S}_{1:t}}[R(t)]\geq 0\) for all \(t\), inequality \eqref{eqn: telescope series} implies that \(\mathds{E}_{\mathcal{S}_{1:t}}[R(t)]\to 0\) as \(T\to\infty\), which establishes the global convergence of Algorithm~\ref{alg: sBPDMM}. In addition, if we apply Jensen's inequality to \eqref{eqn: telescope series}, we obtain the following corollary, which shows the the \(O(1/T)\) iteration complexity of Algorithm~\ref{alg: sBPDMM} in an ergodic sense.

\begin{corollary}[Iteration complexity]
Suppose that Assumption~\ref{basic assumption} holds. Let the sequence \(\{y^{t}, x^{t}, \mu^{t}\}\) be generated by Algorithm~\ref{alg: sBPDMM}. Let \(V(t)\) be defined as in \eqref{Lyapunov function}, \(\overline{x}^{T}=\frac{1}{T}\sum_{t=0}^{T-1}x^{t}\). If \(\rho, \tau, \gamma, \omega\) satisfy
\eqref{parameter}, then
\begin{align*}
&\mathds{E}_{\mathcal{S}_{1:T}}\left[L(\overline{x}^T, \mu^\star)-L(x^\star, \mu^\star)\right]\leq  \frac{V(0)}{\omega T}\\
&\mathds{E}_{\mathcal{S}_{1:T}}\left[\frac{1}{2}\norm{((I_{|\mathcal{V}|}-P)\otimes I_n)\overline{x}^{T}}_2^2\right]\leq  \frac{V(0)}{\gamma \rho T}
\end{align*}
\label{corollary 1/T convergence}
\end{corollary}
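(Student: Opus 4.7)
The plan is to start from \eqref{eqn: telescope series}, which upper-bounds $\sum_{t=1}^{T}\mathds{E}_{\mathcal{S}_{1:t}}[R(t)]$ by $V(0)$, and split this into the two pieces we want to control. Reindexing and substituting the definition of $R(t+1)$ from \eqref{residuals}, I would then discard the nonnegative term $\rho\sum_{i\in\mathcal{S}_{t+1}}B_\phi(x_i^{t+1},y_i^t)$ to obtain
\[\sum_{t=0}^{T-1}\mathds{E}_{\mathcal{S}_{1:t+1}}\!\left[\omega\bigl(L(x^t,\mu^\star)-L(x^\star,\mu^\star)\bigr)+\tfrac{\gamma\rho}{2}\|(Q\otimes I_n)x^t\|_2^2\right]\leq V(0),\]
where $Q=I_{|\mathcal{V}|}-P$. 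By \eqref{eqn: duality gap} and the obvious nonnegativity of the squared norm, both summands on the left are nonnegative, so dropping either piece still yields a valid upper bound on the other.

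For the first claim I would invoke Jensen's inequality on the map $x\mapsto L(x,\mu^\star)$, which is convex in its first argument because each $f_i+\delta_\mathcal{X}$ is convex and $\langle\mu^\star,(Q\otimes I_n)x\rangle$ is linear in $x$. With $\overline{x}^T=\tfrac{1}{T}\sum_{t=0}^{T-1}x^t$, Jensen gives
\[L(\overline{x}^T,\mu^\star)-L(x^\star,\mu^\star)\leq \tfrac{1}{T}\sum_{t=0}^{T-1}\bigl(L(x^t,\mu^\star)-L(x^\star,\mu^\star)\bigr).\]
Taking $\mathds{E}_{\mathcal{S}_{1:T}}$ of both sides and combining with the telescoping bound above (after discarding the norm-squared term on the left) produces the advertised $V(0)/(\omega T)$ rate.

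For the second claim I would repeat the same Jensen step with the convex function $x\mapsto\tfrac{1}{2}\|(Q\otimes I_n)x\|_2^2$, which yields $\tfrac{1}{2}\|(Q\otimes I_n)\overline{x}^T\|_2^2\leq \tfrac{1}{T}\sum_{t=0}^{T-1}\tfrac{1}{2}\|(Q\otimes I_n)x^t\|_2^2$; dropping the duality-gap piece from the telescoping bound and dividing by $\gamma\rho T/2$ delivers the $V(0)/(\gamma\rho T)$ rate.

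The only real obstacle is cosmetic bookkeeping: the telescoping inequality carries a different conditioning $\mathds{E}_{\mathcal{S}_{1:t+1}}$ for each $t$, whereas the corollary states a single outer expectation $\mathds{E}_{\mathcal{S}_{1:T}}$. This is resolved by the tower property, since $x^t$ (and hence any function of $x^0,\ldots,x^{T-1}$ appearing inside the sum) is $\mathcal{S}_{1:T}$-measurable, so each $\mathds{E}_{\mathcal{S}_{1:t+1}}[\,\cdot\,]$ agrees with $\mathds{E}_{\mathcal{S}_{1:T}}[\,\cdot\,]$ after pulling the sum outside. No other technical difficulty arises; the argument is just Jensen plus nonnegative-term dropping applied to the one-line bound \eqref{eqn: telescope series}.
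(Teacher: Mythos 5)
Your argument is correct and coincides with the paper's own route: the paper derives the corollary precisely by applying Jensen's inequality (for the convex maps \(x\mapsto L(x,\mu^\star)\) and \(x\mapsto\tfrac{1}{2}\norm{(Q\otimes I_n)x}_2^2\)) to the telescoped bound \eqref{eqn: telescope series}, after dropping the nonnegative Bregman term in \(R(t)\) and using \eqref{eqn: duality gap} to separate the two summands. Your handling of the conditioning via the tower property is a correct filling-in of a detail the paper leaves implicit.
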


The bound on running duality gap was used in \cite{meng2015proximal}.
\section{Numerical examples}\label{section: experiments}
In this section, we demonstrate the effectiveness and efficiency of Algorithm~\ref{alg: sBPDMM} via numerical examples. 

Consider the an instance of problem \eqref{opt: dist opt} where \(f_i(x_i)=\langle c_i, x_i\rangle\) and \(\mathcal{X}=\{u\in\mathbb{R}^{n}_+|\norm{u}_1=1\}\) is the probability simplex, \(\mathcal{G}=(\mathcal{V}, \mathcal{E})\) is a undirected connected communication graph. Such optimizaton can model, for example, multi-agent decision making, where \(c_i\) is the cost of agent \(i\) for choosing policy \(x_i\). 

We generate an instance of this optimization where entries of \(c_1, \ldots, c_{|\mathcal{V}|}\in\mathbb{R}^{100}\) are sampled from standard normal distribution. \(\mathcal{G}\) is a randomly generated with \(|\mathcal{V}|=100\) and edge probability \(0.2\) \cite[p.~90]{mesbahi2010graph}. Matrix \(P\) is obtained by minimizing its second largest eigenvalue (in this case, \(\lambda_2(P)=0.4786\)) while preserving graph adjacency constraints. We choose the following parameters in Algorithm~\ref{alg: sBPDMM}:
\begin{itemize}
    \item \(\phi(u)=\sum_{k=1}^nu[k]\ln u[k]\), where \(u[k]\) denotes the \(k\)-th element of vector \(u\). Then assumption in \eqref{phi strong convexity} is satisfied by \(\alpha=1, p=1\) (see Remark~1 in \cite{wang2014bregman}).
    \item \(\rho=1, \tau=\omega/(4-2\omega)\). Notice that assumptions in \eqref{parameter} are satisfied with \(\gamma=\omega/2\).
\end{itemize}

With these assumptions, the mirror averaging step \eqref{mirror averaging} and local optimization step \eqref{primal update} reduces to the following (see Section 4.3 in \cite{bubeck2015convex} for details)
\begin{subequations}
    \begin{align}
         y_i^t=&\text{Proj}\left[\textstyle \prod_{j\in\mathcal{N}(i)}(x_j^t)^{P_{ij}}\right]\\
         x_i^t=&\text{Proj}\left[\textstyle y_i^t\exp\frac{-c_i-\mu_i+\sum_{j\in\mathcal{N}(i)}P_{ij}\mu_j}{\rho}\right]
    \end{align}
    \label{eqn: sBPDMM implementation}
\end{subequations}
where multiplication, power and exponential operation on vectors are all elementwise, and \(\text{Proj}[u]=u/\norm{u}_1\) for all \(u\in\mathbb{R}^n\). Update  \eqref{eqn: sBPDMM implementation} amounts to elementwise operation that allows massive parallel implementation.

We demonstrate the convergence performance of Algorithm~\ref{alg: sBPDMM} in Fig.~\ref{fig: omega} and Fig.~\ref{fig: phi}, where \(f^t\) and \(f^\star\) are the objective function value achieved at iteration \(t\) and, respectively, optimality. In particular, Fig.~\ref{fig: omega} shows that as \(\omega\) increases, the convergence of Algorithm~\ref{alg: sBPDMM} becomes faster and less oscillating, which is because more nodes get updated at each iteration. Fig.~\ref{fig: phi} shows that when we choose \(\phi\) as negative entropy function rather than quadratic function, the convergence speed is improved dramatically. This is because compared with quadratic function, negative entropy function exploits the structure of probability simplex much better. Such improvement demonstrates the advantage of Algorithm~\ref{alg: sBPDMM} over stochastic multiplier methods based on quadratic augmentation \cite{wei20131, wang2014parallel,zhu2016stochastic}.

\begin{figure}[ht]
    \centering
    \includegraphics[width=0.9\linewidth]{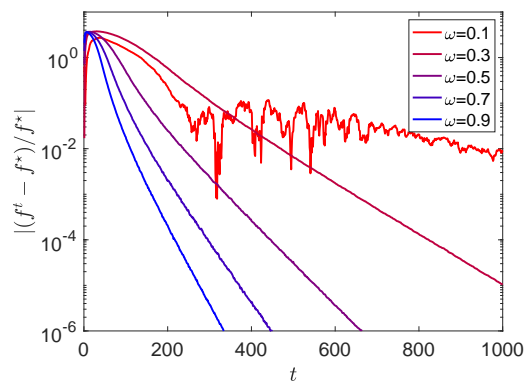}
    \caption{Comparison of different \(\omega\) values}
    \label{fig: omega}
\end{figure}
\begin{figure}[ht]
    \centering
    \includegraphics[width=0.9\linewidth]{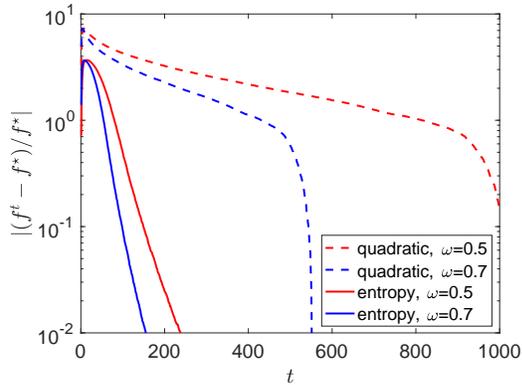}
    \caption{Comparison of different \(\phi\) function}
    \label{fig: phi}
\end{figure}

\section{Conclusions}\label{section: conclusion}
In this paper, we generalize BPDMM \cite{yu2018bregman} to stochastic BPDMM, where each iteration only solves local optimization on a randomly selected subset of nodes rather than all the nodes in the network. Such generalization requires less number of processors running in parallel, hence allows application to much larger scale networks. Future directions include generalization to directed and time varying networks.   

%\addtolength{\textheight}{-12cm}   % This command serves to balance the column lengths
                                  % on the last page of the document manually. It shortens
                                  % the textheight of the last page by a suitable amount.
                                  % This command does not take effect until the next page
                                  % so it should come on the page before the last. Make
                                  % sure that you do not shorten the textheight too much.

%%%%%%%%%%%%%%%%%%%%%%%%%%%%%%%%%%%%%%%%%%%%%%%%%%%%%%%%%%%%%%%%%%%%%%%%%%%%%%%%

%\section*{ACKNOWLEDGMENT} Some acknowledgements.

\bibliographystyle{IEEEtran}
\bibliography{IEEEabrv,reference}

\newpage
\section*{APPENDIX}
For notation simplicity, we let \(Q\coloneqq I_{|\mathcal{V}|}-P\). Suppose Assumption~\ref{basic assumption} holds, then the nullspace of \(I_{|\mathcal{V}|}-P\) is spanned by \(\mathbf{1}_{|\mathcal{V}|}\)
In addition, Assumption \eqref{basic assumption} and update rule \eqref{sBPDMM primal update} ensure that
\begin{subequations}
\begin{align}
(Q\otimes I_n)x^\star=&0\label{eqn: consensus assumption}\\
\delta_{\mathcal{X}}(x_i^t)=\delta_{\mathcal{X}}(x_i^\star)=&0, \enskip \forall i\in\mathcal{V}\label{eqn: feasibility assumption}
\end{align}
\end{subequations} 
for all \(t\).
We will need the following lemmas.
\begin{lemma} Let 
\begin{equation}
    y_i^t=\underset{y_i\in\mathcal{X}}{\argmin}\sum_{j\in\mathcal{N}(i)}P_{ij}B_\phi(y_i, x_j^t),
    \label{eqn: mirror averaging appendix}
\end{equation}
for all \(i\in\mathcal{V}\). Then for any \(u\in\mathcal{X}\), 
\begin{equation}
\sum_{i\in\mathcal{V}} \left(B_\phi(u, x_i^{t})- B_\phi(u, y_i^{t})\right)\geq\sum_{i,j\in\mathcal{V}}P_{ij}B_\phi(y_i^{t}, x_j^{t})
\label{lemma Pythagorean: eqn1}
\end{equation}\label{lemma Pythagorean}
\end{lemma}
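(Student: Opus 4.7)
The plan is to derive the inequality from the first-order optimality condition for the mirror averaging subproblem, combined with the three-point identity \eqref{3-point property} and the symmetry/stochasticity of $P$.

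First, since $\phi$ is differentiable and $\nabla_{y_i} B_\phi(y_i, x_j^t) = \nabla\phi(y_i) - \nabla\phi(x_j^t)$, the optimality condition for \eqref{eqn: mirror averaging appendix} at $y_i^t \in \mathcal{X}$ reads
\[
0 \in \nabla\phi(y_i^t) - \sum_{j\in\mathcal{N}(i)} P_{ij}\nabla\phi(x_j^t) + \partial \delta_{\mathcal{X}}(y_i^t),
\]
so there exists a subgradient $g_i \in \partial\delta_{\mathcal{X}}(y_i^t)$ with $\sum_j P_{ij}\nabla\phi(x_j^t) - \nabla\phi(y_i^t) = g_i$. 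Applying the subgradient inequality \eqref{definition of subgradient} of $\delta_\mathcal{X}$ at $y_i^t$ against any $u\in\mathcal{X}$ (where both $\delta_\mathcal{X}(u)$ and $\delta_\mathcal{X}(y_i^t)$ vanish) yields the variational inequality
\[
\Bigl\langle \nabla\phi(y_i^t) - \sum_{j\in\mathcal{N}(i)}P_{ij}\nabla\phi(x_j^t),\; u - y_i^t \Bigr\rangle \geq 0.
\]

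Next, I would instantiate the three-point identity \eqref{3-point property} with $u\leftarrow y_i^t$, $v\leftarrow x_j^t$, $w\leftarrow u$ to get
\[
\langle \nabla\phi(y_i^t)-\nabla\phi(x_j^t),\, u-y_i^t\rangle = B_\phi(u, x_j^t) - B_\phi(u, y_i^t) - B_\phi(y_i^t, x_j^t),
\]
then multiply by $P_{ij}$ and sum over $j$ (using $\sum_j P_{ij}=1$ since $P$ is stochastic). The variational inequality above then gives, for every $i\in\mathcal{V}$,
\[
0 \leq \sum_{j\in\mathcal{V}} P_{ij} B_\phi(u, x_j^t) - B_\phi(u, y_i^t) - \sum_{j\in\mathcal{V}} P_{ij} B_\phi(y_i^t, x_j^t).
\]

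Finally, I would sum this over $i\in\mathcal{V}$ and swap the summation order in the first term; symmetry of $P$ together with $\sum_i P_{ij} = 1$ collapses $\sum_i \sum_j P_{ij} B_\phi(u, x_j^t)$ into $\sum_j B_\phi(u, x_j^t)$, which rearranges exactly into \eqref{lemma Pythagorean: eqn1}. The only subtle point is the step where I invoke $\partial\delta_\mathcal{X}(y_i^t)$, which requires $y_i^t$ to actually lie in $\mathcal{X}$; this is ensured by the constraint in \eqref{eqn: mirror averaging appendix}, so there is no real obstacle, and the proof is short and essentially mechanical once the three-point identity is deployed.
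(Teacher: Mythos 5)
Your proof is correct and follows essentially the same route as the paper's: both derive the first-order variational inequality for the mirror-averaging subproblem, apply the three-point identity \eqref{3-point property} termwise, and sum over $i$ using the symmetry and stochasticity of $P$ to collapse the double sum. The only cosmetic difference is that you spell out the optimality condition via $\partial\delta_{\mathcal{X}}(y_i^t)$, whereas the paper asserts the variational inequality directly.
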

\begin{proof}
Equation \eqref{eqn: mirror averaging appendix} holds if and only if: for any \(u\in\mathcal{X}\), 
\begin{equation*}
\sum_{j\in\mathcal{V}}P_{ij}\langle \nabla\phi(y_i^{(t)})-\nabla \phi(x_j^{(t)}), u-y_i^{(t)}\rangle\geq 0
\end{equation*}
Using three point property \eqref{3-point property}, we have
\begin{equation}
\begin{aligned}
&\sum_{j\in\mathcal{V}}P_{ij}B_\phi(u, x_j^{(t)})-\sum_{j\in\mathcal{V}}P_{ij}B_\phi(u, y_i^{(t)}) \\
\geq&\sum_{j\in\mathcal{V}}P_{ij}B_\phi(y_i^{(t)}, x_j^{(t)})
\end{aligned}
\label{lemma Pythagoream: eqn2}
\end{equation}
Summing \eqref{lemma Pythagoream: eqn2} over all \(i\in\mathcal{V}\) completes the proof.
\end{proof}

\begin{lemma}
Suppose Assumption~\ref{basic assumption} holds. Then
\begin{equation}
\begin{aligned}
&\sigma\norm{(Q\otimes I_n)u}_2^2\leq \sum_{i,j\in\mathcal{V}}P_{ij} \norm{u_i-v_j}_p^2
\label{lemma residual & variance: eqn1}
\end{aligned}
\end{equation}
for all \(u, v\in\mathcal{X}^{|\mathcal{V}|}\), where \(\norm{\cdot}_p\) denote \(l_p\) norm and \(\sigma=\min\{1, n^{\frac{2}{p}-1}\}\).
\label{lemma residual & variance}
\end{lemma}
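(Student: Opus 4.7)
The plan is to split the argument into two independent reductions. First, absorb the $\ell_p$-to-$\ell_2$ norm discrepancy into the constant $\sigma$. Second, use the symmetric, stochastic, positive semidefinite structure of $P$ to prove the remaining $\ell_2$ inequality.

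For the first step, the standard norm comparison gives $\sigma\norm{x}_2^2\le\norm{x}_p^2$ for every $x\in\mathbb{R}^n$ with $\sigma=\min\{1,n^{2/p-1}\}$, obtained from the cases $p\le 2$ and $p\ge 2$ of the elementary $\ell_p$ embedding inequalities on $\mathbb{R}^n$. Applying this to each block $u_i-v_j$ and taking a nonnegative combination with weights $P_{ij}$ yields
\begin{equation*}
\sigma\sum_{i,j\in\mathcal{V}}P_{ij}\norm{u_i-v_j}_2^2\le\sum_{i,j\in\mathcal{V}}P_{ij}\norm{u_i-v_j}_p^2,
\end{equation*}
so it suffices to establish the $\ell_2$ version $\norm{(Q\otimes I_n)u}_2^2\le\sum_{i,j}P_{ij}\norm{u_i-v_j}_2^2$.

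For the $\ell_2$ version, I would expand the right-hand side using the row- and column-sum identities $\sum_jP_{ij}=\sum_iP_{ij}=1$ (valid since $P$ is symmetric and stochastic) to get $\sum_{i,j}P_{ij}\norm{u_i-v_j}_2^2=\norm{u}_2^2-2\langle u,(P\otimes I_n)v\rangle+\norm{v}_2^2$. Regarded as a quadratic form in $v$, this expression is minimized at $v^\star=(P\otimes I_n)u$, and using $\langle u,(P^2\otimes I_n)u\rangle=\norm{(P\otimes I_n)u}_2^2$ (from $P$ symmetric), the minimum value equals $\norm{u}_2^2-\norm{(P\otimes I_n)u}_2^2$. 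Since $\norm{(Q\otimes I_n)u}_2^2=\norm{u}_2^2-2\langle u,(P\otimes I_n)u\rangle+\norm{(P\otimes I_n)u}_2^2$, the desired $\ell_2$ bound reduces to $\norm{(P\otimes I_n)u}_2^2\le\langle u,(P\otimes I_n)u\rangle$, i.e.\ $\langle u,((P-P^2)\otimes I_n)u\rangle\ge 0$.

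This final step is where the PSD hypothesis in Assumption~\ref{basic assumption} is essential: the eigenvalues of the symmetric stochastic PSD matrix $P$ lie in $[0,1]$, so $P-P^2$ has eigenvalues $\lambda(1-\lambda)\ge 0$ and is therefore PSD, which extends to $(P-P^2)\otimes I_n\succeq 0$. I expect the main conceptual obstacle to be recognizing that the full PSD assumption on $P$ (rather than merely its symmetry and stochasticity) is precisely what makes the bound hold; without PSD, the inequality $\norm{Pu}_2^2\le\langle u,Pu\rangle$ can fail and one can construct $u,v\in\mathcal{X}^{|\mathcal{V}|}$ violating the lemma. Once this spectral observation is made, the remaining manipulations are routine algebra.
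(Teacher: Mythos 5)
Your proof is correct and follows essentially the same route as the paper's: reduce to the $\ell_2$ case via the norm-equivalence constant $\sigma$, lower-bound the sum by minimizing over $v$ (your explicit quadratic-form minimization at $v^\star=(P\otimes I_n)u$ is the same step the paper phrases as the weighted mean minimizing $\sum_j P_{ij}\norm{w-u_j}_2^2$), and close with positive semidefiniteness of $P-P^2$. The only cosmetic difference is that you derive $P-P^2\succeq 0$ spectrally while the paper cites it from Horn and Johnson.
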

\begin{proof}
First, observe that if \(P\) is symmetric, stochastic, irreducible and positive semi-definite,  \(P-P^2\) is positive semi-definite \cite[Theorem 8.4.4]{horn2012matrix}. Since \(P\mathbf{1}_{|\mathcal{V}|}=P^\top\mathbf{1}_{|\mathcal{V}|}=\mathbf{1}_{|\mathcal{V}|}\), we can show the following
\begin{equation*}
\begin{aligned}
&\sum_{i, j\in\mathcal{V}}P_{ij}\norm{\sum_{k\in\mathcal{V}} P_{ik}u_k-u_j}_2^2\\
= & \norm{u}_2^2-\norm{(P\otimes I_n) u}_2^2\\
\geq &\norm{u}_2^2-\norm{(P\otimes I_n)u}_2^2 - 2\langle u, ((P-P^2)\otimes I_n)u\rangle\\
=&\norm{(Q\otimes I_n)u}_2^2
\end{aligned}
\end{equation*} 
Hence \eqref{lemma residual & variance: eqn1} holds due to the fact that 
\begin{equation*}
\sum_{k\in\mathcal{V}} P_{ik}u_k = \underset{w\in\mathcal{X}}{\argmin}\sum_{j\in\mathcal{V}} P_{ij}\norm{w-u_j}_2^2,
\end{equation*}
for all \(i\in\mathcal{V}\), and that \(\norm{w}_2^2\leq 1/\sigma\norm{w}_p^2\) for all \(w\in\mathbb{R}^n\) where \(\sigma = \min\{1, n^{\frac{2}{p}-1}\}\).
\end{proof}

\subsection{Lemma~\ref{lemma Lyapunov positive}}
\begin{proof}
Using \eqref{eqn: consensus assumption} and \eqref{eqn: Lagrangian}  we can show that
\begin{equation}
\begin{aligned}
&L(x^t, \mu^t)-L(x^\star, \mu^\star)\\
= &\sum_{i\in\mathcal{V}}\left((f_i+\delta_\mathcal{X})(x_i^{t})- (f_i+\delta_\mathcal{X})(x_i^\star)\right)\\
&+\langle\mu^t, (Q\otimes I_n)x^t\rangle\overset{\eqref{KKT: dual} }{\geq} \langle \mu^t-\mu^\star, (Q\otimes I_n)x^{t}\rangle 
\end{aligned}
\label{lemma Lyapunov positive: eqn1}
\end{equation}
Substitute \eqref{lemma Lyapunov positive: eqn1} into \eqref{generalized Lagrangian} we have
\begin{equation}
\begin{aligned}
&H(x^t, \mu^t)\\
 \geq &\langle \mu^{t}-\mu^\star, (Q\otimes I_n)x^{t}\rangle-\tau\norm{(Q\otimes I_n)x^{t}}_2^2\\
 \overset{\eqref{dual update} }{=} &\frac{1}{\tau}\langle \mu^{t-1}-\mu^\star, \mu^t-\mu^{t-1}\rangle\\
  \geq & -\frac{\omega}{2\tau}\norm{\mu^{t-1}-\mu^\star}_2^2-\frac{1}{2\omega\tau}\norm{ \mu^t-\mu^{t-1}}_2^2
\end{aligned}
\label{lemma Lyapunov positive: eqn2}
\end{equation}
where the last step is due to \(2\langle a, b\rangle\geq -\norm{a}_2^2-\norm{b}_2^2\). Therefore, substitute \eqref{lemma Lyapunov positive: eqn2} into \eqref{Lyapunov function} we have 

\begin{equation}
\begin{aligned}
 V(t)\geq  & \rho\sum_{i\in\mathcal{V}} B_\phi(x_i^\star, x_i^{t})-\frac{1}{2\omega\tau}\norm{ \mu^t-\mu^{t-1}}_2^2 \\
 \overset{\eqref{phi strong convexity}}{\geq }&
 \left(\rho-\frac{\tau}{\omega\alpha\sigma}\right)\sum_{i\in\mathcal{V}}B_\phi(x_i^\star, x_i^{t})\\
 &+\frac{\tau}{2\omega\sigma} \sum_{i\in\mathcal{V}} \norm{x_i^{t}-x_i^\star}_p^2-\frac{1}{2\omega\tau}\norm{\mu^t-\mu^{t-1}}_2^2 \\
  \overset{\eqref{parameter} }{\geq} &\frac{(1-\omega)\omega\alpha\sigma\rho+\gamma\rho}{(2-\omega)\omega\alpha\sigma}\sum_{i\in\mathcal{V}}B_\phi(x_i^\star, x_i^{t}) \\
  & +\frac{\tau}{2\omega\sigma}\left(\sum_{i\in\mathcal{V}} \norm{x_i^{t}-x_i^\star}_p^2-\frac{\sigma}{\tau^2}\norm{\mu^t-\mu^{t-1}}_2^2\right) 
\label{eqn: Lyapunov positivity}
\end{aligned}
\end{equation}
Since \(x^\star_i=x_j^\star\) for all \(i,j\in\mathcal{V}\), we have 
\begin{equation*}
\begin{aligned}
0 \overset{\eqref{lemma residual & variance: eqn1} }{\leq} & \sum_{i, j\in\mathcal{V}}P_{ij} \norm{x_i^{t}-x_j^\star}_p^2-\sigma\norm{ (Q\otimes I_n)x^{t}}_2^2  \\
    =&\sum_{i, j\in\mathcal{V}}P_{ij} \norm{x_i^{t}-x_i^\star}_p^2-\sigma\norm{ (Q\otimes I_n)x^{t}}_2^2 \\
    \overset{\eqref{dual update} }{=}&\sum_{i\in\mathcal{V}} \norm{x_i^{t}-x_i^\star}_p^2-\frac{\sigma}{\tau^2}\norm{\mu^t-\mu^{t-1}}_2^2
    \end{aligned}
\end{equation*}
Substitute the above inequality into \eqref{eqn: Lyapunov positivity} we obtain \eqref{eqn: lemma Lyapunov positive}.
\end{proof}
\subsection{Theorem~\ref{theorem global convergence} }
\begin{proof}
Let \(q_i\) be the \(i\)-th column of \(Q\). Since \(f+\delta_\mathcal{X}\) is convex, the subgradient in \eqref{primal updata: optimality} satisfy the following
\begin{equation}
\begin{aligned}
&\sum_{i\in\mathcal{S}_{t+1}} f_{i}(x_{i}^{t+1})-\sum_{i\in\mathcal{S}_{t+1}} f_{i}(x_i^\star)\\
\leq & \sum_{{i}\in\mathcal{S}_{t+1}}\langle -\mu^{t}, (q_{i}\otimes I_n)(x_{i}^{t+1} -x_i^\star) \rangle\\
& + \rho\sum_{i\in\mathcal{S}_{t+1}}\langle\nabla\phi(x_{i}^{t+1})-\nabla\phi(y_{i}^{t}), x_i^\star-x_{i}^{t+1} \rangle,
\end{aligned}
\label{theorem global convergence: eqn1}
\end{equation}
where we use \eqref{eqn: feasibility assumption}.

The first term on the RHS of \eqref{theorem global convergence: eqn1} can be rewritten as 
\begin{equation}
\begin{aligned}
& \sum_{{i}\in\mathcal{S}_{t+1}}\langle -\mu^{t}, (q_{i}\otimes I_n)(x_{i}^{t+1} -x_i^\star) \rangle\\
 \overset{\eqref{not updated variable}}{=} & \sum_{{i}\in\mathcal{S}_{t+1}}\langle -\mu^{t}, (q_{i}\otimes I_n)(x_{i}^{t} -x_i^\star)  \rangle\\
 & + \langle \mu^{t}, (Q\otimes I_n)x^{t}  \rangle- \langle \mu^{t}, (Q\otimes I_n)x^{t+1} \rangle\\
 \overset{\eqref{dual update} }{=} &-\sum_{{i}\in\mathcal{S}_{t+1}}\langle \mu^{t}, (q_{i}\otimes I_n)(x_{i}^{t} -x_i^\star)  \rangle + \langle \mu^{t}, (Q\otimes I_n)x^{t}  \rangle\\
 & - \langle \mu^{t+1}, (Q\otimes I_n)x^{t+1}  \rangle + \tau\norm{(Q\otimes I_n)x^{t+1}}_2^2
\end{aligned}
\label{theorem global convergence: eqn2}
\end{equation} 
To simplify the second term on the RHS of \eqref{theorem global convergence: eqn1}, notice that
\begin{equation}
\begin{aligned}
& \sum_{i\in\mathcal{S}_{t+1}}\langle\nabla\phi(x_{i}^{t+1})-\nabla\phi(y_{i}^{t}), x_i^\star-x_{i}^{t+1} \rangle\\
&\overset{\eqref{3-point property} }{=}  \sum_{i\in\mathcal{S}_{t+1}} \left(B_\phi(x_i^\star, y_{i}^{t}) -  B_\phi(x_i^\star, x_{i}^{t+1})-B_\phi(x_{i}^{t+1}, y_{i}^{t})\right)\\
&\overset{\eqref{not updated variable} }{=} 
 \sum_{i\in\mathcal{S}_{t+1}} \left(B_\phi(x_i^\star, y_{i}^{t}) -  B_\phi(x_i^\star, x_{i}^{t})\right) +\sum_{i\in\mathcal{V}}  B_\phi(x_i^\star, x_{i}^{t})\\
 &-\sum_{i\in\mathcal{V}}  B_\phi(x_i^\star, x_{i}^{t+1})  - \sum_{i\in\mathcal{S}_{t+1}}  B_\phi(x_{i}^{t+1}, y_{i}^{t})
\end{aligned}
\label{theorem global convergence: eqn3}
\end{equation}

Substitute \eqref{theorem global convergence: eqn2} and \eqref{theorem global convergence: eqn3} into \eqref{theorem global convergence: eqn1}, we have 
\begin{equation}
\begin{aligned}
&\sum_{i\in\mathcal{S}_{t+1}} f_{i}(x_{i}^{t+1})-\sum_{i\in\mathcal{S}_{t+1}} f_{i}(x_i^\star)\\
\leq &-\sum_{{i}\in\mathcal{S}_{t+1}}\langle \mu^{t}, (q_{i}\otimes I_n)(x_{i}^{t} -x_i^\star)  \rangle  + \langle \mu^{t}, (Q\otimes I_n)x^{t}  \rangle\\
 & - \langle \mu^{t+1}, (Q\otimes I_n)x^{t+1}  \rangle + \tau\norm{(Q\otimes I_n)x^{t+1}}_2^2\\
 &+\rho \sum_{i\in\mathcal{S}_{t+1}} \left(B_\phi(x_i^\star, y_{i}^{t}) -  B_\phi(x_i^\star, x_{i}^{t})\right)\\
&+\rho\sum_{i\in\mathcal{V}}  B_\phi(x_i^\star, x_{i}^{t})-\rho\sum_{i\in\mathcal{V}}  B_\phi(x_i^\star, x_{i}^{t+1})\\
&- \rho\sum_{i\in\mathcal{S}_{t+1}}  B_\phi(x_{i}^{t+1}, y_{i}^{t})
\end{aligned}
\label{theorem global convergence: eqn5}
\end{equation}
In addition, notice that 
\begin{equation}
\begin{aligned}
& \sum_{{i}\in\mathcal{S}_{t+1}} \left( f_{i}(x_{i}^{t})- f_{i}(x_i^\star)\right)\overset{\eqref{not updated variable}}{=}\\
&\sum_{{i}\in\mathcal{S}_{t+1}} \left(f_{i}(x_{i}^{t+1})
- f_{i}(x_i^\star)\right) +\sum_{i\in\mathcal{V}} \left(f_i(x_i^{t})- f_i(x_i^{t+1})\right)
\end{aligned}
\label{theorem global convergence: eqn6}
\end{equation}
Substitute \eqref{theorem global convergence: eqn5} into \eqref{theorem global convergence: eqn6}, we have 
\begin{equation}
\begin{aligned}
& \sum_{{i}\in\mathcal{S}_{t+1}} \left(f_{i}(x_{i}^{t})- f_{i}(x_i^\star)\right)\\
\leq  & H(x^{t}, \mu^{t})-H(x^{t+1}, \mu^{t+1}) + \tau\norm{(Q\otimes I_n)x^{t}}_2^2\\
& - \sum_{{i}\in\mathcal{S}_{t+1}}\langle \mu^{t}, (q_{i}\otimes I_n)(x_{i}^{t}-x_i^\star)  \rangle\\
&+\rho \sum_{i\in\mathcal{S}_{t+1}} \left(B_\phi(x_i^\star, y_{i}^{t}) -  B_\phi(x_i^\star, x_{i}^{t})\right)\\
&+\rho\sum_{i\in\mathcal{V}} B_\phi(x_i^\star, x_{i}^{t})-\rho\sum_{i\in\mathcal{V}} B_\phi(x_i^\star, x_{i}^{t+1}) \\
&  - \rho\sum_{i\in\mathcal{S}_{t+1}}  B_\phi(x_{i}^{t+1}, y_{i}^{t})\\
\end{aligned}
\label{theorem global convergence: eqn7}
\end{equation}
where we use the definition in  \eqref{residuals}.

Taking the expectation of \eqref{theorem global convergence: eqn7} over \(\mathcal{S}_{t+1}\) conditioned on \(x^{t}\), we have the following
\begin{equation}
\begin{aligned}
& \omega\sum_{{i}\in\mathcal{V}} \left(f_{i}(x_{i}^{t})- f_{i}(x_i^\star)\right)\\
\leq & H(x^{t}, \mu^{t})-\mathds{E}_{\mathcal{S}_{t+1}}\left[H(x^{t+1}, \mu^{t+1})\right]\\
&+ \tau\norm{(Q\otimes I_n)x^{t}}_2^2 - \omega\langle \mu^{t}, (Q\otimes I_n)x^{t}  \rangle  \\
& + \rho\omega\sum_{{i}\in\mathcal{V}}\left( B_\phi(x_i^\star, y_{i}^{t}) -  B_\phi(x_i^\star, x_{i}^{t})\right)\\
&+  \rho\sum_{i\in\mathcal{V}} B_\phi(x_i^\star, x_i^{t})- \rho\mathds{E}_{\mathcal{S}_{t+1}}\left[ \sum_{i\in\mathcal{V}}  B_\phi(x_i^\star, x_i^{t+1}) \right] \\
& - \rho\mathds{E}_{\mathcal{S}_{t+1}}\left[\sum_{i\in\mathcal{S}_{t+1}} B_\phi(x_{i}^{t+1}, y_{i}^{t}) \right]\\
\end{aligned}
\label{theorem global convergence: eqn8}
\end{equation}
where we use \eqref{eqn: consensus assumption}.
Here we assume \(y^t_i\) is computed as in \eqref{BPDMM: mirror averaging} for all nodes in \(\mathcal{V}\), even though Algorithm~\ref{alg: BPDMM} only require computation on nodes in \(\mathcal{S}_{t+1}\).
Substitute \eqref{eqn: feasibility assumption} into \eqref{eqn: Lagrangian} we have
\begin{equation}
    \begin{aligned}
    &\sum_{i\in\mathcal{V}} (f_i(x_i^t)-f_i(x_i^\star))\\
    = &L(x^t, \mu^\star)-L(x^\star, \mu^\star)-\langle\mu^\star, (Q\otimes I_n)x^t \rangle
    \end{aligned}
    \label{theorem global convergence: eqn9}
\end{equation}

Combine \eqref{theorem global convergence: eqn8} and \eqref{theorem global convergence: eqn9} we have
\begin{equation}
\begin{aligned}
&\mathds{E}_{\mathcal{S}_{t+1}}\left[R(t+1)\right]\\
\leq & H(x^{t}, \mu^{t})-\mathds{E}_{\mathcal{S}_{t+1}}\left[H(x^{t+1}, \mu^{t+1})\right]\\
& - \omega\langle \mu^{t}-\mu^\star, (Q\otimes I_n)x^{t}  \rangle  \\
& + \rho\omega\sum_{{i}\in\mathcal{V}}\left( B_\phi(x_i^\star, y_{i}^{t}) -  B_\phi(x_i^\star, x_{i}^{t})\right)\\
&+  \rho\sum_{i\in\mathcal{V}}  B_\phi(x_i^\star, x_i^{t})- \rho\mathds{E}_{\mathcal{S}_{t+1}}\left[\sum_{i\in\mathcal{V}}  B_\phi(x_i^\star, x_i^{t+1})\right] \\
&  + (\tau+\frac{\rho\gamma}{2})\norm{(Q\otimes I_n)x^{t}}_2^2
\end{aligned}
\label{theorem global convergence: eqn10}
\end{equation}

Using \eqref{3-point property} and \eqref{dual update} we can show
\begin{equation}
\begin{aligned}
& -\langle \mu^{t}-\mu^\star, (Q\otimes I_n)x^{t}\rangle = \frac{1}{2\tau}\norm{\mu^\star-\mu^{t-1}}_2^2\\
&-\frac{1}{2\tau}\norm{\mu^\star-\mu^{t}}_2^2-\frac{\tau}{2}\norm{(Q\otimes I_n)x^{t}}_2^2
\end{aligned}
\label{theorem global convergence: eqn11}
\end{equation}
Substitue \eqref{theorem global convergence: eqn11} into \eqref{theorem global convergence: eqn10}, use the definition in \eqref{Lyapunov function} we have
\begin{equation}
\begin{aligned}
&\mathds{E}_{\mathcal{S}_{t+1}}\left[R(t+1)\right] \\
\leq &V(t)-\mathds{E}_{\mathcal{S}_{t+1}}\left[V(t+1)\right]\\
& + \rho\omega\sum_{{i}\in\mathcal{V}} \left(B_\phi(x_i^\star, y_{i}^{t}) - B_\phi(x_i^\star, x_{i}^{t})\right)\\
& +\left(\tau+\frac{\rho\gamma}{2}-\frac{\omega\tau}{2}\right)\norm{(Q\otimes I_n)x^{t}}_2^2
\end{aligned}
\label{theorem global convergence: eqn12}
\end{equation}
Since 
\begin{equation}
    \begin{aligned}
    &\sum_{{i}\in\mathcal{V}} (B_\phi(x_i^\star, y_{i}^{t}) -  B_\phi(x_i^\star, x_{i}^{t}))\\ \overset{\eqref{lemma Pythagorean: eqn1}}{\leq}& -\sum_{i, j\in\mathcal{V}} P_{ij}B_\phi(y_i^t, x_j^t)
    \overset{\eqref{phi strong convexity}}{\leq} -\frac{\alpha}{2}\sum_{{i, j}\in\mathcal{V}}P_{ij}\norm{y_i^t-x_i^t}_p^2\\
    \overset{\eqref{lemma residual & variance: eqn1} }{\leq}& -\frac{\alpha\sigma}{2} \norm{(Q\otimes I_n) x^t}_2^2
    \end{aligned}
    \label{theorem global convergence: eqn13}
\end{equation}
Substitute \eqref{theorem global convergence: eqn11} into \eqref{theorem global convergence: eqn10} we have
\begin{equation}
\begin{aligned}
&\mathds{E}_{\mathcal{S}_{t+1}}\left[R(t+1)\right]\\
 \leq &V(t)-\mathds{E}_{\mathcal{S}_{t+1}}\left[V(t+1)\right]\\
 &+ \frac{(2-\omega)\tau+\rho(\gamma-\omega\alpha\sigma)}{2}\norm{(Q\otimes I_n)x^{t}}_2^2\\
 \overset{\eqref{parameter} }{\leq} & V(t)-\mathds{E}_{\mathcal{S}_{t+1}}\left[V(t+1)\right].
\end{aligned}
\label{theorem global convergence: eqn14}
\end{equation}

Taking the expectation of \eqref{theorem global convergence: eqn10} over realization of \(\mathcal{S}_{1:t}\) we obtain the desired results.
\end{proof}

\end{document}